\documentclass[11pt]{amsart}
\usepackage{amsfonts}
\usepackage{amssymb}
\usepackage{color}
\usepackage{graphicx}
\usepackage{amsmath,amssymb,amsthm} \usepackage[mathscr]{eucal}
\usepackage{amscd}

\newtheorem{theorem}[]{Theorem}
\newtheorem{lemma}[theorem]{Lemma}
\newtheorem{corollary}[theorem]{Corollary}

\theoremstyle{definition}

\theoremstyle{remark}

\numberwithin{equation}{section}
\numberwithin{theorem}{section}

\textwidth 6in
\oddsidemargin.25in
\evensidemargin.25in
\parskip.05in

\begin{document}

\title{Heegaard Diagrams Corresponding to Turaev Surfaces}

\author[C. Armond]{Cody Armond}
\address{Department of Mathematics, University of Iowa,
Iowa City, IA 52242-1419, USA}
\email{cody-armond@uiowa.edu}

\author[N. Druivenga]{Nathan Druivenga}
\address{Department of Mathematics, University of Iowa,
Iowa City, IA 52242-1419, USA}
\email{nathan-druivenga@uiowa.edu}

\author[T. Kindred]{Thomas Kindred}
\address{Department of Mathematics, University of Iowa,
Iowa City, IA 52242-1419, USA}
\email{thomas-kindred@uiowa.edu}

\subjclass{}
\date{}

\maketitle

\begin{abstract}
We describe a correspondence between Turaev surfaces of link diagrams on $S^2\subset S^3$ and special Heegaard diagrams for $S^3$ adapted to links. 
\end{abstract}

\section{Introduction}

To construct the Turaev surface $\Sigma$ of a link diagram $D$ on $S^2\subset S^3$, one pushes the all-A and all-B states of $D$ to opposite sides of $S^2$, connects these two states with a certain cobordism, and caps the state circles with disks.  
Turaev's original construction~\cite{tur} streamlined Murasugi's proof~\cite{mur}, based on Kauffman's work ~\cite{k} on the Jones polynomial~\cite{j}, of Tait's longstanding conjecture on the crossing numbers of alternating links~\cite{tait}.  See also~\cite{this}.  
More recently, Turaev surfaces have
provided geometric means for interpreting Khovanov and knot Floer homologies, as in~\cite{ck, cks07, cks11, dl11, dl13, low, weh}. 

Dasbach, Futer, Kalfagianni, Lin, and Stoltzfus showed that the Turaev surface of any connected link diagram $D$ on $S^2\subset S^3$ is a splitting surface for $S^3$ on which $D$ forms an alternating link diagram~\cite{dfkls}. 
When equipped with the type of crossing ball structure developed by Menasco~\cite{men}, the projection sphere provides natural attaching circles for the two handlebodies of this splitting, completing a
Heegaard diagram $(\Sigma,\alpha,\beta)$ for $S^3$.  By characterizing the interplay between this Heegaard diagram and the original link diagram $D$, we obtain a correspondence between Turaev surfaces and particular Heegaard diagrams adapted to links. Figure~\ref{Fi:quotTur} shows a typical example of such a diagram $(\Sigma,\alpha,\beta,D)$.

First, \textsection2 defines Heegaard splittings and diagrams, link diagrams, crossing ball structures, and Turaev surfaces. Next, \textsection3 
constructs and describes the special, link-adapted Heegaard diagrams $(\Sigma,\alpha,\beta,D)$. Finally, \textsection4 establishes the following correspondences:
 
\textbf{Theorem \ref{Th:Main1}.}
{\it There is a 1-to-1 correspondence between Turaev surfaces of connected link diagrams on $S^2\subset S^3$ and diagrams $(\Sigma,\alpha,\beta,D)$ with the following properties:

$\bullet$ $(\Sigma,\alpha,\beta)$ is a Heegaard diagram for $S^3$, with $\alpha\pitchfork \beta$.

$\bullet$ $D$ is an alternating link diagram on $\Sigma$ which cuts $\Sigma$ into disks, with $D\pitchfork \alpha$ and $D\pitchfork \beta$. 

$\bullet$  $D\cap\alpha=D\cap\beta=\alpha\cap \beta$, none of these points being crossings of $D$. 

$\bullet$ There is a checkerboard partition $\Sigma\setminus(\alpha\cup\beta)=\Sigma_\varnothing\cup \Sigma_K$, in which 
$\Sigma_\varnothing$ consists of disks disjoint from $D$, in which $D$ cuts $\Sigma_K$ into disks each of whose boundary contains at least one crossing point and at most two points of $\alpha\cap \beta$, and in which $ 2  g(\Sigma)+|\Sigma_\varnothing|=|\alpha|+|\beta|$.}

\textbf{Theorem \ref{Th:Gen}:}
{\it There is a 1-to-1 correspondence between generalized Turaev surfaces, constructed from dual pairs of states of connected link diagrams on $S^2\subset S^3$, and diagrams $(\Sigma,\alpha,\beta,D)$ with the properties in Theorem \ref{Th:Main1}, except that $D$ need not alternate on $\Sigma$.}

\begin{figure}
\begin{center}
\scalebox{.2}{\includegraphics{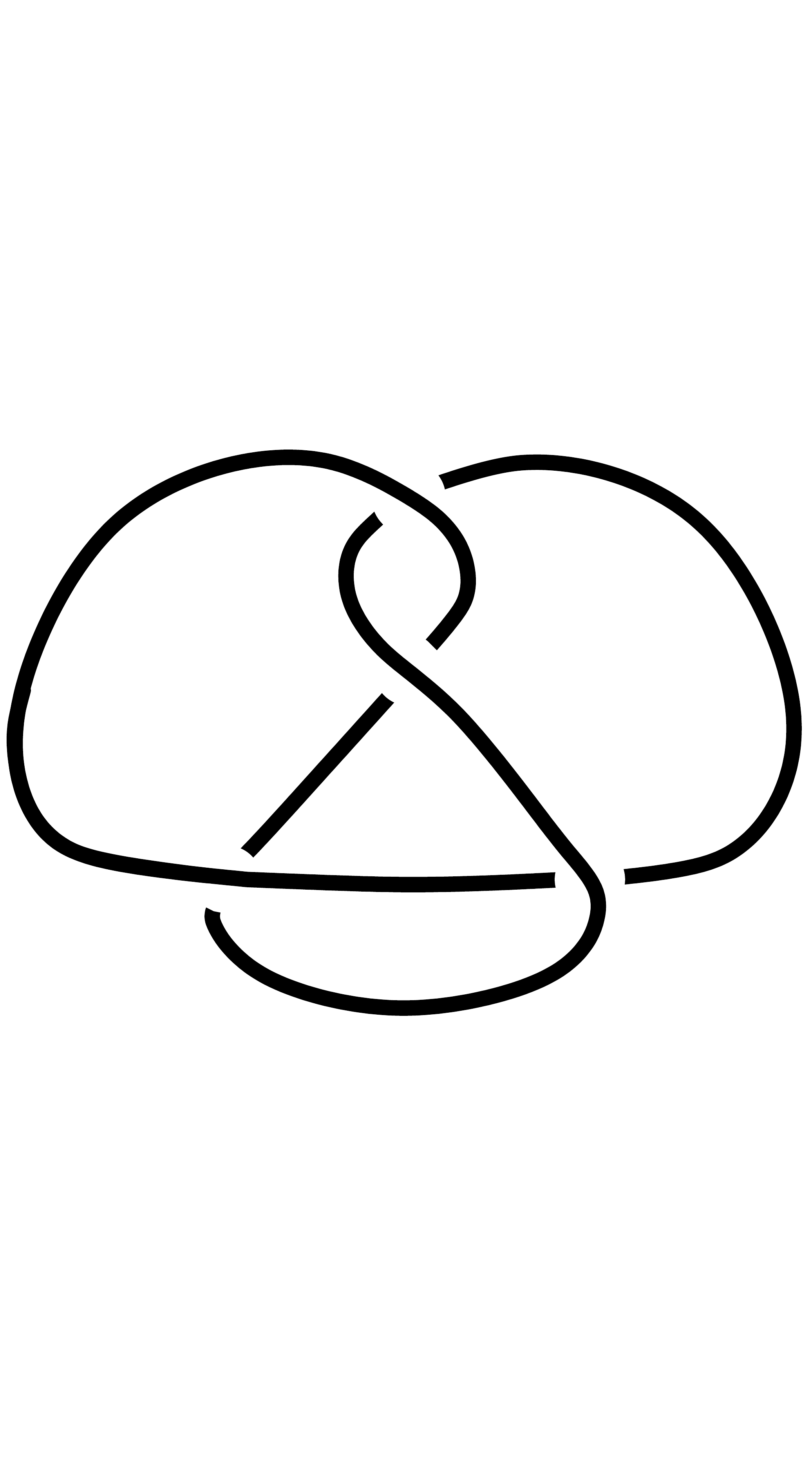}}
$\qquad$
\scalebox{.45}{\includegraphics{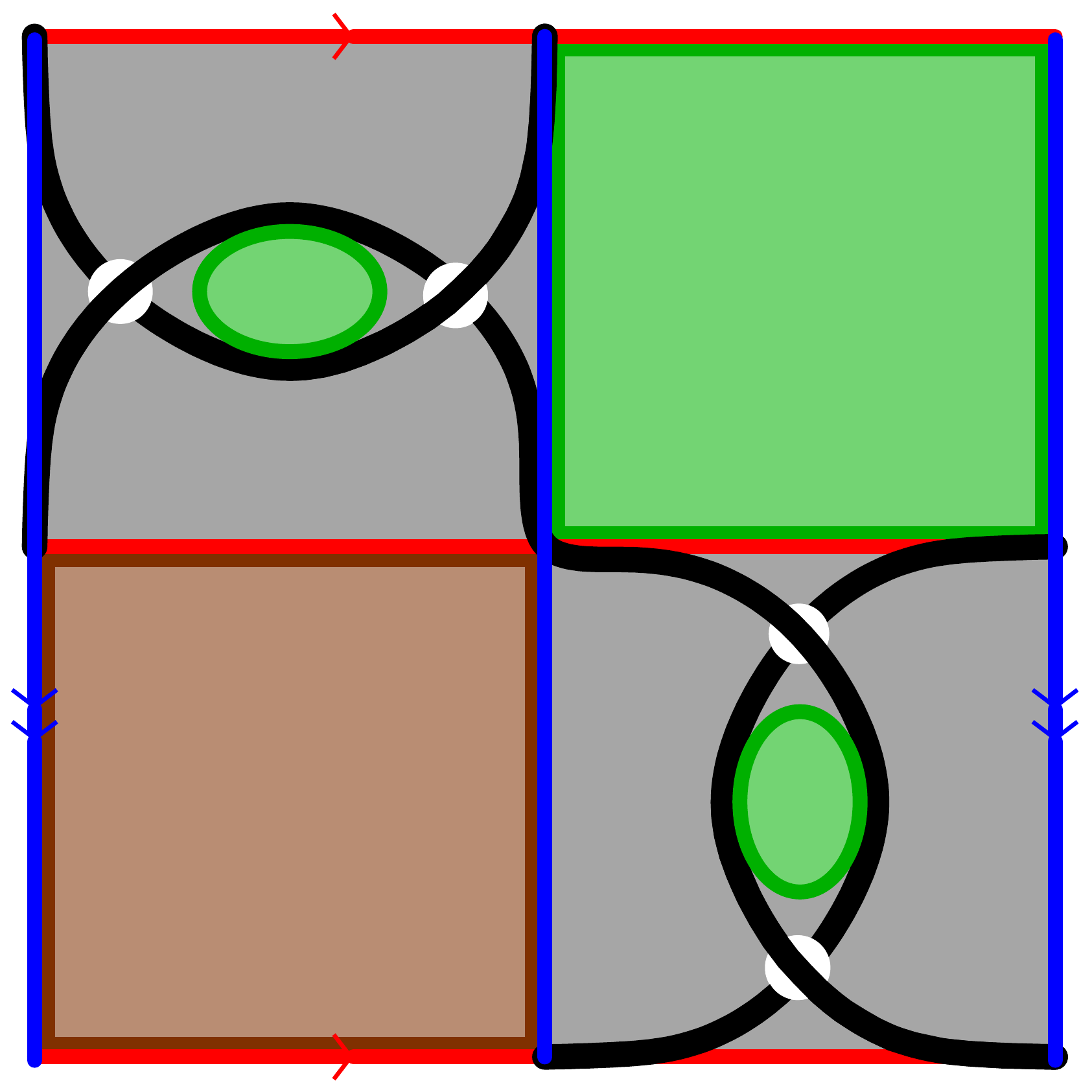}  }  
\caption{A link diagram 
on $S^2$, and the link-adapted Heegaard diagram $(\Sigma,\alpha,\beta,D)$ corresponding 
to its Turaev surface, the torus 
in Figure~\ref{Fi:Tur}. 
As in all figures, the link is black; the crossing balls are white; the attaching circles comprising $\alpha$ and $\beta$ are red and blue, respectively; and the circles and disks from the 
all-A state are green, while those from the all-B state are brown.  
}\label{Fi:quotTur}
\end{center}
\end{figure}

\textbf{Acknowledgements:} We would like to thank Charlie Frohman, Maggy Tomova, Ryan Blair, Oliver Dasbach, Adam Lowrance, Neal Stoltzfus, and Effie Kalfagianni for helpful conversations.

\section{Background}

\subsection{Heegaard splittings and diagrams}\label{Se:Heeg}
A {\it Heegaard splitting} of an orientable $3$-manifold $M$ is a decomposition of $M$ 
into two handlebodies $H_\alpha$ and $H_\beta$ with common boundary.  The surface  $\partial H_\alpha = \partial H_\beta = \Sigma$ is called a {\it splitting surface} for $M$. 
In this paper, we address only the case in which $M=S^3$. 

One can describe a handlebody $H$ by identifying on its boundary $\partial H = \Sigma$ a collection of
 disjoint, 
simple closed curves $\alpha_1, \ldots, \alpha_k$, such that each $\alpha_i$ bounds a disk $\hat{\alpha}_i$ in $H$, and such that these disks together cut $H$ into a disjoint union of balls. The $\alpha_i$ are called {\it attaching circles} for $H$. Some conventions require that the $\hat{\alpha}_i$ together cut $H$ into a single ball, hence $k=g(\Sigma)$; 
though not requiring this, our definition does imply that $k\geq g(\Sigma)$. 

A {\it Heegaard diagram} $(\Sigma,\alpha,\beta)$ combines these ideas to blueprint a 3-manifold.  The diagram consists of a splitting surface $\Sigma = \partial H_\alpha = \partial H_\beta$, together with a union ${\alpha}=\bigcup\alpha_i$ of attaching circles for $H_\alpha$ and a union ${\beta}=\bigcup\beta_i$ of attaching circles for $H_\beta$. 
If $(\Sigma,\alpha,\beta)$ is a Heegaard diagram for $S^3$, then the circles of $\alpha$ and $\beta$ together generate 
$H_1(\Sigma)$. The Appendix provides an easy proof of this fact, using Seifert surfaces.


\begin{figure}
\begin{center}
\scalebox{.135}{\includegraphics{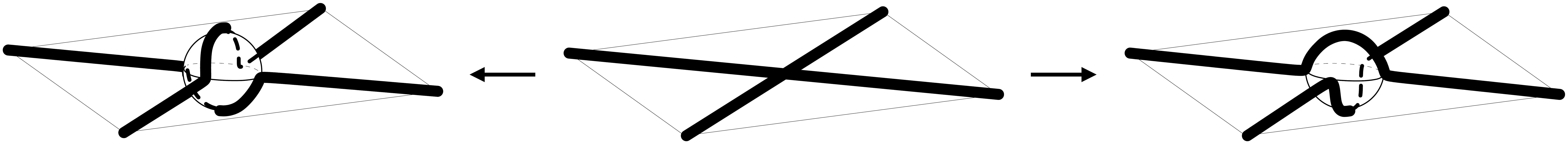}}
\caption{Each crossing in a link diagram is labeled in one of two ways.  The label tells one how to adjust the link after inserting a crossing ball.
}\label{Fi:bubble}
\end{center}
\end{figure}

\newpage

\subsection{Link diagrams and crossing balls}\label{Se:link}
A 
{\it link 
diagram} $D$ 
on a closed surface $F\subset S^3$ is the
 image, in general position, of an immersion of one or more
circles in $F$; each arc at any crossing point is labeled with a direction normal to $F$ near that point, so that under- and over-crossings have been identified. 
By inserting small,
mutually disjoint crossing balls $C=\bigcup C_i$ centered at the crossing points of $D$
and pushing the two intersecting arcs of each $D\cap C_i$ off $F$ to the appropriate hemisphere of $\partial C_i\setminus F$, as in Figure~\ref{Fi:bubble}, one obtains a configuration of a link $K\subset (F\setminus C)\cup \partial C\subset S^3$.  Call this a {\it crossing ball configuration} of the link $K$ corresponding to the link diagram $D$. 

Conversely, given mutually disjoint crossing balls $C=\bigcup C_i$ centered at points on a closed surface $F\subset S^3$, and a link $K\subset (F\setminus C)\cup \partial C$ in which each crossing ball appears as in Figure~\ref{Fi:bubble}, one may obtain a corresponding link diagram as follows.
Consider a regular neighborhood 
of $F$ 
that contains $C$ and is parameterized by an orientation-preserving 
homeomorphism with $F\times [-1,1]$ which identifies $F$ with $F\times \{0\}$. If $\pi:F\times[-1,1]\to F$ denotes the natural projection, the link diagram corresponding to the crossing ball configuration $K\subset (F\setminus C)\cup \partial C\subset S^3$ is the projected image $\pi(K)\subset F$ with appropriate crossing labels.  

In such a 
crossing ball configuration, each arc of $K\cap \partial C$ lies either in $F\times[-1,0]$ or in $F\times[0,1]$.  The former arcs are called {\it under-passes}, and the latter are called {\it over-passes}. A link diagram $D$ is said to be {\it alternating} if each arc of $K\setminus C$ in a corresponding crossing ball configuration joins an under-pass with an over-pass. 
A link $K\subset S^3$ is 
{alternating} if it has an alternating diagram on $S^2$. 

In particular, any Heegaard diagram $(\Sigma,\alpha,\beta)$ for $S^3$ provides an embedding of the closed surface $\Sigma$ in $S^3$. One may therefore superimpose a link diagram $D$ on the Heegaard diagram to obtain a new type of diagram $(\Sigma,\alpha,\beta,D)$.  This new diagram describes a Heegaard splitting of $S^3$ in which the splitting surface contains a link diagram.

\begin{figure}
\begin{center}
\scalebox{.25}{\includegraphics{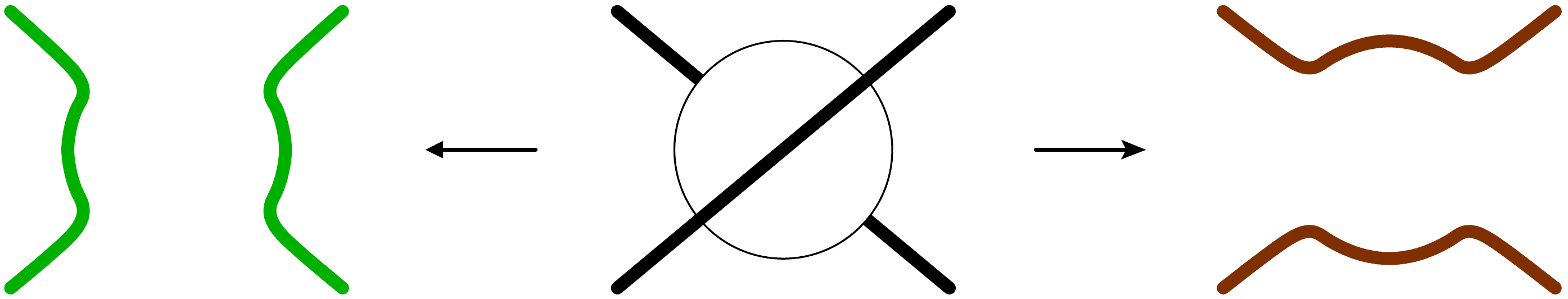}}
\caption{The A-smoothing (left) and B-smoothing (right) of a crossing.}\label{Fi:smooth}
\end{center}
\end{figure}

\subsection{Turaev surfaces}\label{Se:tur}
Each crossing in a link diagram $D$ on a  surface $F$ can be smoothed in two different ways, by inserting a crossing ball $C_i$ and replacing $D\cap C_i$ with one of the two pairs of arcs of $(\partial C_i\cap F)\setminus D$ opposite to another. Figure~\ref{Fi:smooth} shows the two possibilities, called the {\it A-smoothing}  and the {\it B-smoothing} of the crossing.  Making a choice of 
smoothing for each crossing in the diagram produces a disjoint union of circles on 
$F$, called a {\it state} of the diagram $D$. Two states of $D$ are 
{\it dual} if they have opposite smoothings at each crossing.


\begin{figure}
\begin{center}
\scalebox{.21}{\includegraphics{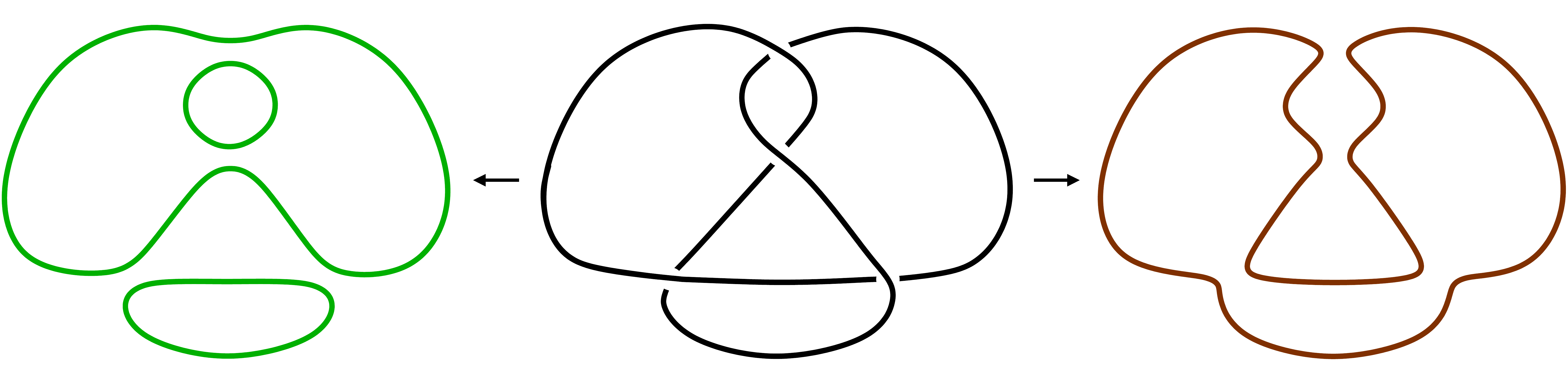}}
\caption{The all-A (left) and all-B (right) states for a link diagram. 
}\label{Fi:ABstates}
\end{center}
\end{figure}

Given a link diagram $D$ on $S^2$, the two extreme states -- the all-A and the all-B -- 
are 
of particular interest,
 due in part to the bounds they give on the maximum and minimum
degrees of the Jones polynomial.  Kauffman's proof~\cite{k} that these bounds are sharp for reduced, alternating diagrams provided the impetus for Murasugi~\cite{mur}, Thistlethwaite~\cite{this}, and Turaev~\cite{tur} to prove Tait's conjecture on the crossing numbers of alternating links. 
Cromwell~\cite{c}, Lickorish and Thistlethwaite~\cite{lt} then extended these results to adequate link diagrams. Figure~\ref{Fi:ABstates} shows the all-A and all-B states for the link diagram from Figure~\ref{Fi:quotTur}.

\begin{figure}
\begin{center}
\scalebox{.36}{\includegraphics{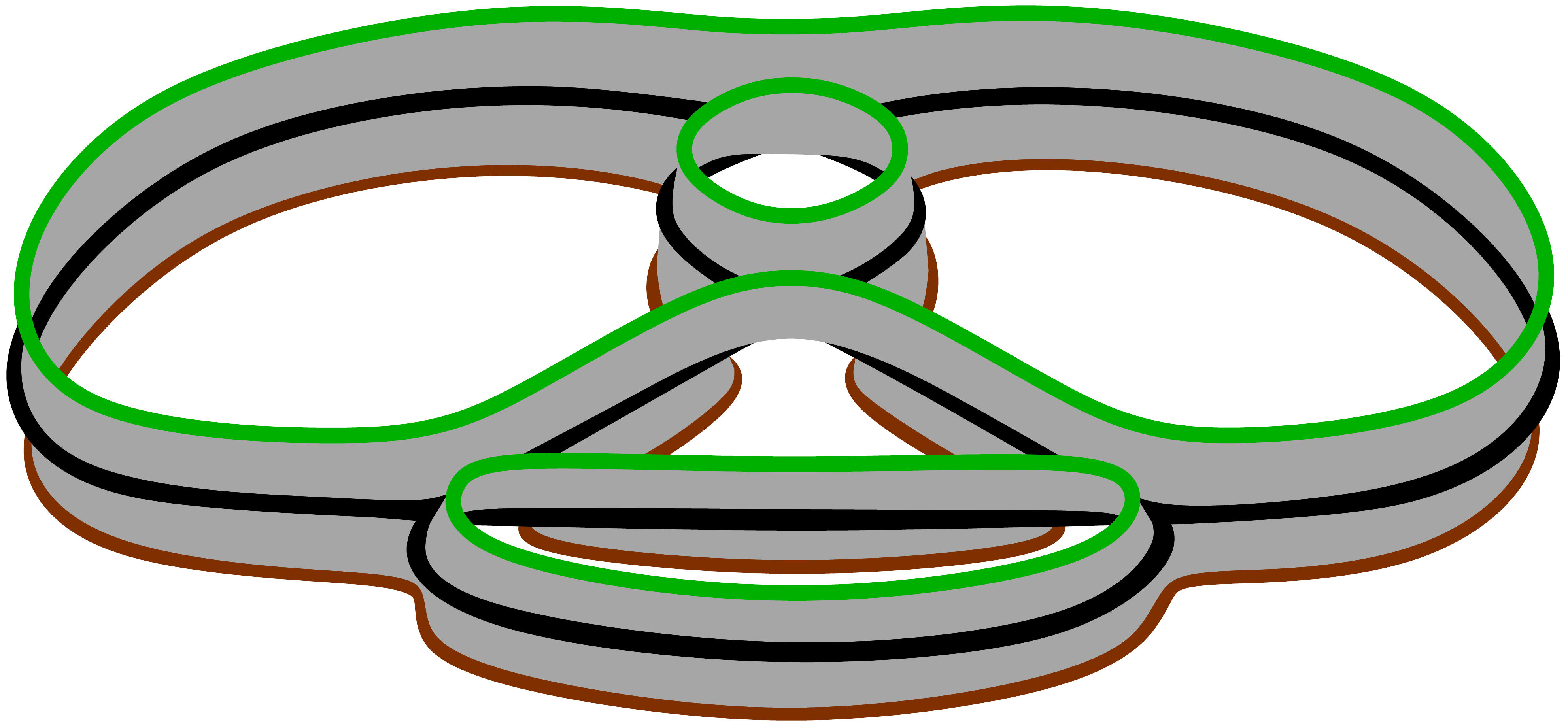} }
\caption{The cobordism between the all-A and all-B states from Figure~\ref{Fi:ABstates}.
}\label{Fi:cobord}
\end{center}
\end{figure}

Following Turaev~\cite{tur}, 
 one can construct a 
 cobordism between the all-A and all-B states 
 as follows.
Parameterize a bi-collaring of $S^2$ as in \textsection\ref{Se:link}, and push the all-A and all-B states off $S^2$ to $S^2\times\{1\}$ and $S^2\times\{-1\}$, respectively, such that each state circle sweeps out an annulus to one side of $S^2$.
Assume that these annuli are mutually disjoint, and that they are disjoint from the crossing balls $C=\bigcup C_i$ used to construct the all-A and all-B states.  
Gluing together these annuli and the disks of $S^2\cap C$ produces the cobordism between the two states. (See Figure~\ref{Fi:cobord}.)  Near each crossing, the cobordism has a saddle, as
in Figure~\ref{Fi:Saddle}.

\begin{figure}
\begin{center}
\scalebox{.195}{\includegraphics{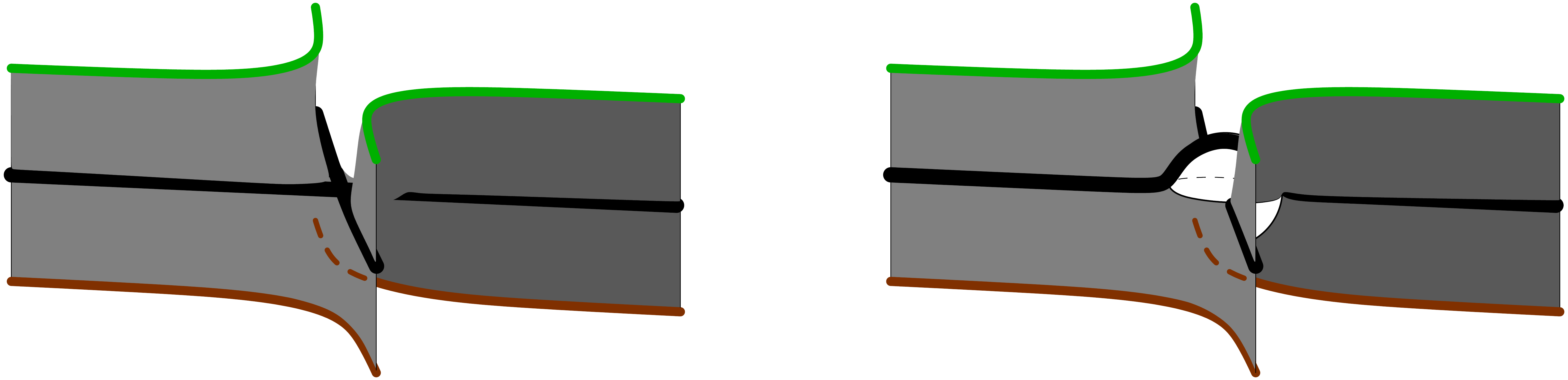} }
\caption{Turaev's cobordism between the all-A and all-B states has a saddle near each crossing, shown here with and without a crossing ball.
}\label{Fi:Saddle}
\end{center}
\end{figure}

Having constructed the cobordism, 
one caps the all-A and all-B states with mutually disjoint 
disks 
to form a closed surface $\Sigma$, 
called the {\it Turaev surface} of the original link diagram $D$ on $S^2$.   Since $\Sigma$ contains a neighborhood of $S^2$ around each crossing point, the crossing information 
of $D$ on $S^2$ translates to crossing information on the Turaev surface. Thus, $D$ forms a link diagram on $\Sigma$. 
A crossing ball configuration corresponding to this link diagram is  $K\subset(\Sigma\setminus C)\cup \partial C$, with under- and over-passes defined as in \textsection\ref{Se:link}.

\begin{figure}
\begin{center}
\scalebox{.58}{\includegraphics{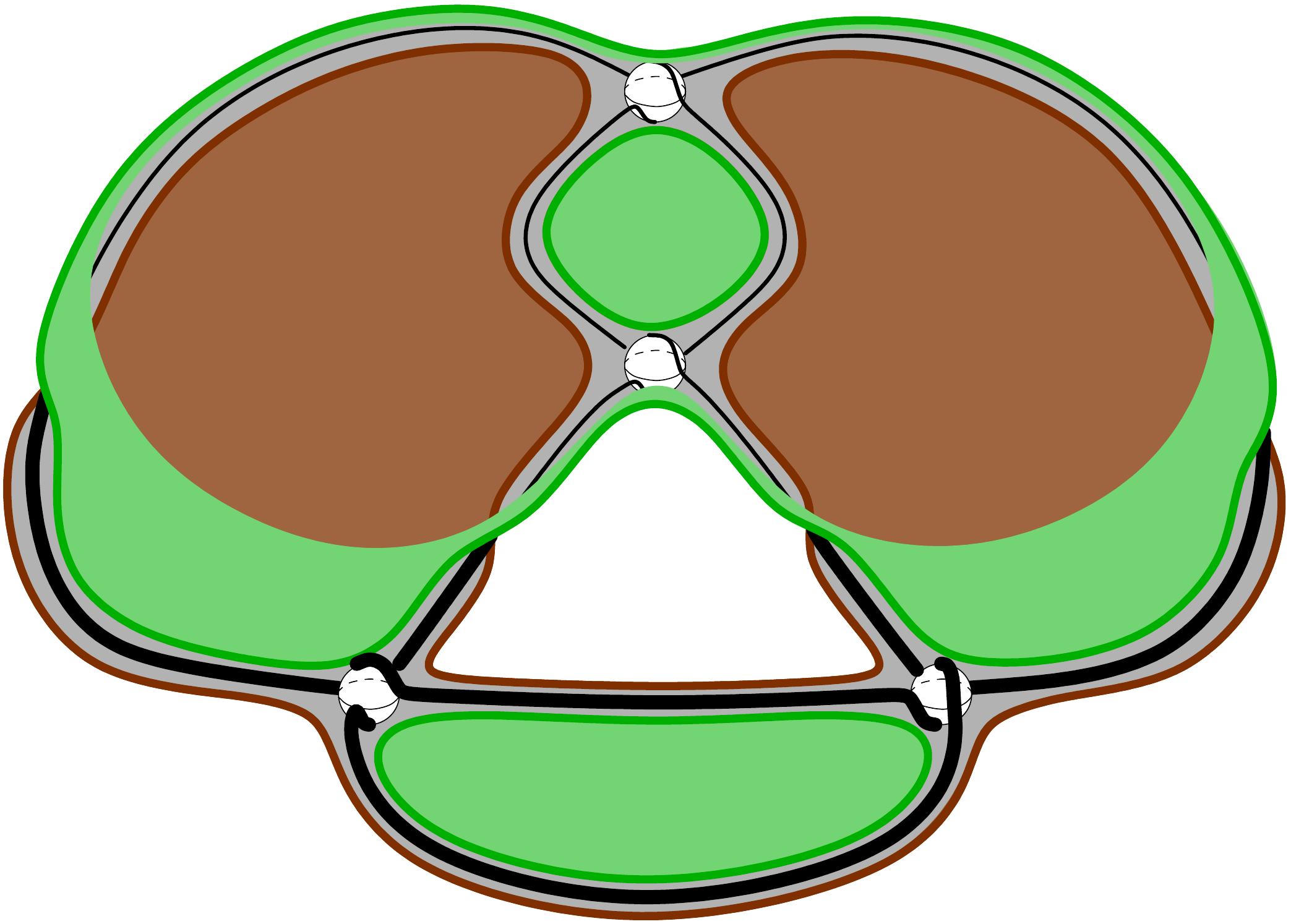} }
\caption{This torus is the Turaev surface of the link diagram in Figures \ref{Fi:quotTur} and \ref{Fi:ABstates}, seen from the ambient space. To provide a window to the far side of the surface, one of the three disks of the all-A state is only partly shown.
}\label{Fi:Tur}
\end{center}
\end{figure}

Observe that $D$ cuts $\Sigma$ into disks, each of which contains exactly one state disk, and that $S^2\cap \Sigma=S^2\cap(C\cup K)=\Sigma\cap (C\cup K)$. Note also that if $D$ is alternating on $S^2$, then $\Sigma$ is a sphere which can be isotoped to $S^2$ while fixing $D$.
Figure~\ref{Fi:Tur} shows a less trivial example.
 
The construction of the Turaev surface generalizes to any pair of states $s$ and $\tilde{s}$ dual to one another. By pushing $s$ and $\tilde{s}$ to opposite sides of $S^2$ to sweep out annuli on opposite sides of $S^2$, gluing in disks near the crossings to obtain a cobordism between $s$ and $\tilde{s}$, and capping off 
with disks, one obtains a closed surface $\Sigma$ on which $D$ forms a link diagram~\cite{bm, tur}.  Call this surface $\Sigma$ the {\it generalized Turaev surface} of the dual states $s$ and $\tilde{s}$.

\section{Construction 
of Heegaard diagrams for Turaev surfaces}\label{Se:3}

Given a connected link diagram $D$ on $S^2\subset S^3$ and its Turaev surface $\Sigma$, this section constructs a 
link-adapted Heegaard diagram $(\Sigma,\alpha,\beta,D)$.  Theorem \ref{Th:5} then characterizes this diagram, providing one direction of the correspondence to come in Theorem \ref{Th:Main1}.  
 
Let $K\subset (S^2\setminus C)\cup \partial C$ be a crossing ball structure corresponding to 
$D$
, and let $H_\alpha$ and $H_\beta$ be the two components of $S^3\setminus \Sigma$.  Define $\hat{\alpha}:=(S^2\setminus (C\cup K))\cap H_\alpha$ and $\hat{\beta}:=(S^2\setminus (C\cup K))\cap H_\beta$ to be the two checkerboard classes of $S^2\setminus (C\cup K)$, with $\alpha:=\partial\hat{\alpha}$ and $\beta:=\partial\hat{\beta}$.  
From this setup, three modifications will complete the construction of the diagram $(\Sigma,\alpha,\beta,D)$. 
During these changes, $\Sigma$, $D$, $S^2$, $C$, and $K$ will remain fixed.


First, perturb $\alpha$ and $\beta$ through the cobordism as follows,  carrying along the disks of $\hat{\alpha}$ and $\hat{\beta}$. Let $X=\{x_1,\hdots, x_n\}$ consist of one point on each arc of $K\setminus C$ which joins two under-passes on $S^2$, and let $Y=\{y_1,\hdots, y_n\}$ consist of one point on each arc of $K\setminus C$ which joins two over-passes on $S^2$.  
Each arc of ${\alpha}\setminus (X\cup Y)$ 
runs 
along a circle from either the all-A state or the all-B state.
Isotope ${\alpha}$ through 
the cobordism so as to push arcs of the former type to $S^2\times (0,1)$ and arcs of the latter type to $S^2\times (-1,0)$, giving $\alpha\cap C=\varnothing$ and ${\alpha}\cap D=X\cup Y$.  Next, isotope ${\beta}$ in the same manner, 
after which $\alpha$ and $\beta$ will both be disjoint from $C$, while $\alpha$, $\beta$, and $D$ will be pairwise transverse and will intersect exclusively at triple points: 
${\alpha}\cap{\beta}={\alpha}\cap D={\beta}\cap D=X\cup Y$.

To further simplify the picture, push the state circles through the cobordism to align with $\alpha \cup \beta$, so that each state disk becomes a component of $\Sigma\setminus (\alpha\cup \beta)$. This causes the neighborhood of each arc of $K\setminus C$ to appear as in Figure~\ref{Fi:altTur}, possibly with red and blue reversed. 
 Note that the state disks' interiors remain disjoint from $D$, in fact from $S^2$.

To complete the construction, remove 
 any attaching circles that are disjoint from 
 $D$.   
Also remove the corresponding disks of $\hat{\alpha}$ and $\hat{\beta}$, and let $\alpha$, $\beta$, $\hat{\alpha}$ and $\hat{\beta}$ retain their names. 
Because each removed circle lies in some disk of $\Sigma\setminus D$,  each removed disk is parallel to $\Sigma$. 

\begin{lemma}[DFKLS~\cite{dfkls}]\label{L:1}
The Turaev surface $\Sigma$ of any connected link diagram $D$ on $S^2\subset S^3$ is a splitting surface for $S^3$.
\end{lemma}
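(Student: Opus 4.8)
The plan is to show directly that the two components $H_\alpha,H_\beta$ of $S^3\setminus\Sigma$ are handlebodies, using the faces of the projection sphere as meridian disks. First I would record that $\Sigma$ is a closed, connected, orientable surface smoothly embedded in $S^3$: orientability is inherited from the oriented bicollar $S^2\times[-1,1]$ in which the annuli, crossing-ball disks, and capping disks are assembled, and a standard Alexander-duality argument then shows that such a surface separates $S^3$ into exactly two components, each a compact $3$-manifold with connected boundary $\Sigma$. Name these $H_\alpha$ and $H_\beta$, and let $B_+,B_-$ be the two balls bounded by $S^2$.

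Next I would produce a candidate meridian-disk system on each side. Since $S^2\cap\Sigma=S^2\cap(C\cup K)$, every component of $S^2\setminus(C\cup K)$ is disjoint from $\Sigma$ and so lies entirely in $H_\alpha$ or in $H_\beta$; this is exactly the checkerboard two-coloring of the diagram, and it sorts these faces into disks $\hat\alpha\subset H_\alpha$ and $\hat\beta\subset H_\beta$ whose boundaries lie on $\Sigma$. The key observation is that cutting $H_\alpha$ along $\hat\alpha$ is the same as cutting $H_\alpha$ along $S^2\cap H_\alpha$, hence the same as cutting $H_\alpha$ along $S^2$; the resulting pieces are the closures of the components of $H_\alpha\cap B_+$ and $H_\alpha\cap B_-$, and likewise for $H_\beta$ and $\hat\beta$. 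It therefore suffices to prove that each such piece is a ball.

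The decisive step, and the main obstacle, is to show that each component $W$ of $\overline{H_\alpha}\cap\overline{B_+}$, and symmetrically of $\overline{H_\alpha}\cap\overline{B_-}$ and the two analogues for $H_\beta$, is a $3$-ball. Since $W$ lies inside the ball $\overline{B_+}$, Alexander's theorem reduces this to checking that $\partial W$ is a $2$-sphere, i.e. that no genus is trapped between $\Sigma$ and $S^2$. The idea is to exploit that the entire construction is carried out in the bicollar $S^2\times[-1,1]$ together with capping disks pushed slightly into $B_\pm$: on the positive side, $\Sigma\cap\overline{B_+}$ is built from the all-A annuli, the upper halves of the saddles, and the all-A caps, and each such cap is parallel into $S^2$. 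Pushing the caps back toward $S^2$ and tracking the checkerboard coloring, across the vertical walls lying over the arcs of $K$ and across the crossing-ball disks at the saddles, should exhibit each $W$ as assembled from collar products over faces of $D$, glued along the saddles into a region with spherical boundary. Carrying out this coloring bookkeeping at the saddles, and confirming that the nesting of the state circles never traps positive genus in a complementary region, is where essentially all of the work lies.

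Once every piece is shown to be a ball, $\hat\alpha$ cuts $H_\alpha$ into balls and $\hat\beta$ cuts $H_\beta$ into balls, so by the definition in \textsection\ref{Se:Heeg} both $H_\alpha$ and $H_\beta$ are handlebodies; hence $\Sigma$ is a splitting surface for $S^3$, as claimed.
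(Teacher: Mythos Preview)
Your approach is the same as the paper's in spirit---use the checkerboard faces of $S^2$ as meridian disks and check that the resulting pieces are balls---but you are missing the one move that turns the paper's proof into a two-line argument. Instead of cutting $S^3$ along $S^2$ alone, the paper cuts along $S^2\cup C$. This has two effects. First, $S^2\cup C$ still cuts $S^3$ into two balls (each is a ball with hemispherical dents). Second, and crucially, it removes the saddle disks $S^2\cap C\subset\Sigma$ from the picture: what remains of $\Sigma$ inside each dented ball is just the state-circle annuli together with their caps, i.e.\ a disjoint union of properly embedded disks. Disjoint disks cut a ball into balls, so every component of $S^3\setminus(S^2\cup C\cup\Sigma)$ is a ball; hence $H_\alpha\setminus C$ and $H_\beta\setminus C$ are handlebodies, and re-inserting the crossing-ball halves (balls glued along boundary disks) preserves this.

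By cutting only along $S^2$, you leave the saddles in place, and the piece of $\Sigma$ on each side of $S^2$ is no longer a disjoint union of disks. That is exactly why your ``decisive step'' feels like genuine work: you are trying to analyze directly the complementary regions of a surface-with-saddles inside a ball, tracking checkerboard colorings across the crossings and worrying about nested state circles. That analysis can be done, but it is unnecessary---excise $C$ first, and the saddles, together with your ``main obstacle,'' disappear.
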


\begin{proof}
Observe that $S^2\cup C$ cuts $S^3$ into two balls, which $\Sigma$ cuts into smaller balls.  Also, $S^3\setminus (S^2\cup C\cup \Sigma)=(H_\alpha\setminus (S^2\cup C))\cup (H_\beta\setminus (S^2\cup C))$, where $H_\alpha$ and $H_\beta$ are the two components of $S^3\setminus \Sigma$.  
Hence, $H_\alpha\setminus C$ and $H_\beta\setminus C$ are handlebodies, as are $H_\alpha$ and $H_\beta$. 
\end{proof}

The proof of Lemma \ref{L:1} implies that $(\Sigma,\alpha,\beta)$ was a Heegaard diagram for $S^3$ when $\alpha$ and $\beta$ were first defined.  The fact that each removed disk of $\hat{\alpha}$ and of $\hat{\beta}$ was parallel to $\Sigma$ implies that $(\Sigma,\alpha,\beta)$ is a Heegaard diagram for $S^3$ in the finished construction as well.

\newpage

\begin{figure}
\begin{center}
\scalebox{.125}{\includegraphics{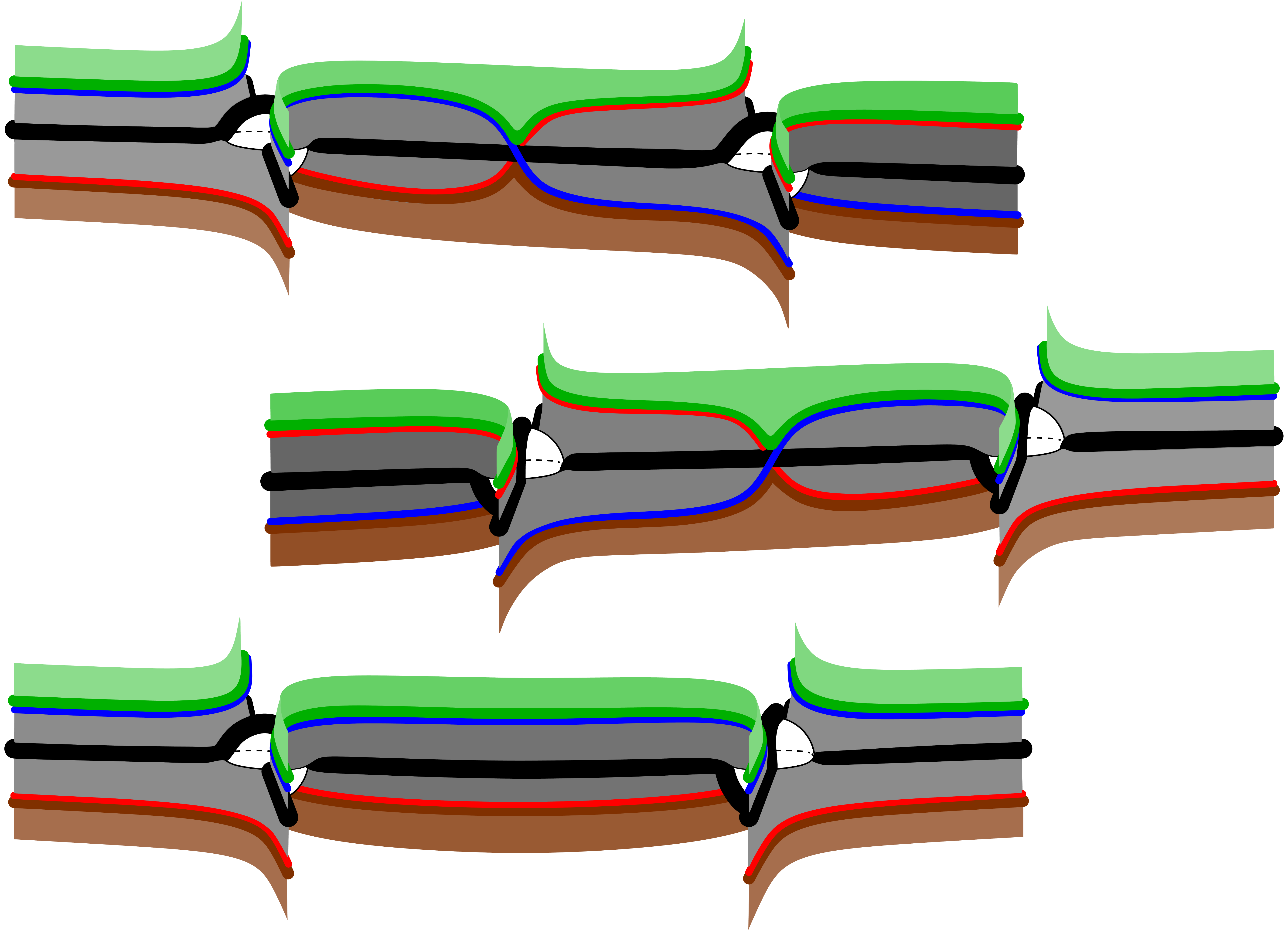} }
\caption{Up to reversing red and blue, these are the possible configurations of the Turaev surface $\Sigma$ between two adjacent crossings, shown at the stage of the construction in which the boundary of each state disk lies in $\alpha\cup\beta$.
}\label{Fi:altTur}
\end{center}
\end{figure}

\begin{lemma}[DFKLS~\cite{dfkls}]\label{L:2}
Any connected link diagram $D$ on $S^2\subset S^3$ forms an alternating link diagram on its Turaev surface $\Sigma$.
\end{lemma}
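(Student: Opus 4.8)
The plan is to recover the alternation from the two families of state disks. Using the fact already established that $D$ cuts $\Sigma$ into disks, each containing exactly one state disk, I would color a complementary region \emph{black} if its state disk comes from the all-A state and \emph{white} if it comes from the all-B state. This is a genuine checkerboard coloring: each edge of $D$ is a common arc of one all-A state circle and one all-B state circle, since the two states differ only inside the crossing balls, so the two regions meeting along that edge receive opposite colors, and the four regions around each crossing are colored black, white, black, white cyclically. The aim is then to show that at every crossing the A-smoothing merges the two regions of one fixed color (say white), and to conclude as follows: interchanging the over- and under-strands at a single crossing would interchange its A- and B-smoothings, hence the color they merge; so if the same color is merged at every crossing, all crossings have the same over/under type relative to the coloring, which is exactly the condition that $D$ alternates on $\Sigma$.

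The geometric content is the claim that the A-smoothing merges the white regions at every crossing, and here I would appeal directly to the construction of $\Sigma$. The all-A state has been pushed to one side of $S^2$ and capped there by disks, the all-B state to the other side and capped, with a saddle inserted at each crossing as in Figure~\ref{Fi:Saddle}; along each edge the surface therefore has the form shown in Figure~\ref{Fi:altTur}. Reading this local model off near a crossing, the all-A state circles run along the two A-smoothing arcs and bound precisely the two diagonally opposite black regions meeting there, so the A-smoothing cuts the black regions apart and joins the two white regions, while dually the all-B circles bound the white regions. Since the all-A state is capped on a single, fixed side of $S^2$ throughout the construction, this description is the same at every crossing, giving the required uniformity.

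The main obstacle --- the only place where the geometry genuinely bites --- is justifying that the inherited crossing data is alternating on $\Sigma$ even though, crossing by crossing, it comes from a diagram that need not alternate on $S^2$. The resolution is that a strand is an over- or under-pass relative to the \emph{global} collar (transverse orientation) of $\Sigma$, and near a given crossing this collar agrees with the up/down direction of $S^2$ only up to a sign that flips as $\Sigma$ threads from above $S^2$, along the all-A annuli, to below it, along the all-B annuli. Turning this sign bookkeeping into a global statement --- rather than a collection of unlinked local ones --- is exactly what forces the use of Lemma~\ref{L:1}: because $\Sigma$ is a genuine splitting surface with the all-A and all-B disks capping its two sides, the black/white coloring above is globally consistent, and the per-crossing merging of a fixed color is well defined across the whole diagram.
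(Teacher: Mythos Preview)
Your argument is correct but takes a different route from the paper's. The paper's proof is a direct local check: for each arc $\kappa$ of $K\setminus C$, it inspects the possible configurations of $\Sigma$ near $\kappa$ (those of Figure~\ref{Fi:altTur}) and observes in each case that the two arcs of $K\cap\partial C$ incident to $\kappa$ lie on opposite sides of $\Sigma$, so one is an over-pass and the other an under-pass on $\Sigma$ --- exactly the definition of alternating. You instead build a checkerboard coloring of $\Sigma\setminus D$ by state type and invoke the characterization of alternating diagrams as those whose crossings all have the same type relative to such a coloring. Both arguments ultimately rest on the same local saddle picture, but the paper reads off the alternation edge by edge while you pass through a global combinatorial structure. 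Your route makes the role of the two state families more explicit and nicely foreshadows the checkerboard partition $\Sigma_\varnothing\cup\Sigma_K$ used later in the paper; the paper's route is shorter and more direct. One remark: your final paragraph, invoking Lemma~\ref{L:1} to guarantee a consistent transverse orientation of $\Sigma$, is more than the argument needs. Since $\Sigma$ contains a neighborhood of $S^2$ around each crossing and the all-$A$ caps are placed uniformly on the $S^2\times\{1\}$ side by construction, the over/under data transfers without sign ambiguity, and the uniformity of the merged color already follows from the local edge analysis alone.
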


\begin{proof}
Recall from \textsection\ref{Se:tur} that $D$ forms a link diagram on $\Sigma$.  On $S^2$, each arc $\kappa$ of $K\setminus C$ joins either two over-passes, two under-passes, or one of each.  Figure~\ref{Fi:altTur} shows the three possible configurations of $\Sigma$ near $\kappa$, prior to the removal of attaching circles, up to reversal of $\alpha$ and $\beta$. In all three cases, the two arcs of $K\cap \partial C$ incident to $\kappa$ lie to opposite sides of $\Sigma$, so that one is an over-pass on $\Sigma$ and the other is an under-pass on $\Sigma$.
\end{proof}

One defines the {\it Turaev genus} $g_T(K)$ of a link $K\subset S^3$ to be the minimum genus among the Turaev surfaces of all diagrams of $K$ on $S^2$. 
The resulting invariant, surveyed in~\cite{ck14}, measures how far a link is from being alternating. See also~\cite{balm}. In particular, Turaev genus provides the crux of Turaev's proof of Tait's conjecture:

\begin{corollary}[Turaev~\cite{tur}, DFKLS~\cite{dfkls}]
A link $K$ is alternating if and only if $g_T(K)=0$.
\end{corollary}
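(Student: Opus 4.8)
The plan is to derive the corollary directly from Lemma \ref{L:2} together with the observation recorded in \textsection\ref{Se:tur} that an alternating diagram on $S^2$ has a genus-zero Turaev surface. No new machinery is required: since $g_T(K)$ is by definition the minimum of the nonnegative integers $g(\Sigma)$ taken over (connected) diagrams of $K$, the content reduces to producing, in each direction, a single diagram whose Turaev surface realizes genus $0$ exactly when $K$ is alternating. The only care needed is to check that the surface identifications transport the link correctly. Throughout I work with connected diagrams, to which the Turaev surface construction applies; the general case reduces to this by treating connected summands separately.

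For the forward implication, suppose $K$ is alternating. By definition $K$ admits an alternating diagram $D$ on $S^2$, which I may take to be connected. The note in \textsection\ref{Se:tur} then asserts that the Turaev surface $\Sigma$ of $D$ is a sphere isotopic to $S^2$, so $g(\Sigma)=0$. As $g(\Sigma)\geq 0$ for every diagram, the minimum defining $g_T(K)$ is achieved here, giving $g_T(K)=0$.

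Conversely, suppose $g_T(K)=0$. Then some connected diagram $D$ of $K$ on $S^2$ has a Turaev surface $\Sigma$ with $g(\Sigma)=0$, so $\Sigma$ is a $2$-sphere embedded in $S^3$. By Lemma \ref{L:2}, $D$ forms an alternating link diagram on $\Sigma$. Since the Turaev surface carries $D$ as a diagram of the \emph{same} link $K$, and since $\Sigma\cong S^2$, this exhibits an alternating diagram of $K$ on a sphere; hence $K$ is alternating by definition.

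The only step with genuine content — and therefore the main obstacle, should one wish to make the argument self-contained rather than cite the \textsection\ref{Se:tur} note — is the claim that the Turaev surface of a connected alternating diagram is a sphere. I would establish this by the standard state-circle count. Using that $D$ cuts $\Sigma$ into disks, each containing exactly one state disk, the cell structure induced by $D$ on $\Sigma$ has $V=c$ vertices (the crossings), $E=2c$ edges, and $F=|s_A|+|s_B|$ faces, where $c$ is the number of crossings and $|s_A|,|s_B|$ count the all-A and all-B state circles; hence $2-2g(\Sigma)=\chi(\Sigma)=|s_A|+|s_B|-c$. For a connected alternating diagram the two states correspond to the two checkerboard colorings, so $|s_A|+|s_B|$ equals the total number of complementary regions, which is $c+2$ by Euler's formula for the underlying $4$-valent planar graph. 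Substituting gives $2-2g(\Sigma)=2$, so $g(\Sigma)=0$. Granting the \textsection\ref{Se:tur} note, however, the corollary is immediate from the two implications above.
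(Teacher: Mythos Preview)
Your argument is correct and is precisely the intended one: the paper states the corollary without proof, positioning it as an immediate consequence of Lemma~\ref{L:2} together with the observation in \textsection\ref{Se:tur} that an alternating diagram on $S^2$ has a spherical Turaev surface, and you have written out exactly that deduction. Your supplementary Euler-characteristic verification of the \textsection\ref{Se:tur} observation is also correct and standard.
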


\newpage

\begin{theorem}\label{Th:5}
From the Turaev surface $\Sigma$ of a connected link diagram $D$ on $S^2\subset S^3$, the construction in this section produces a diagram $(\Sigma,\alpha,\beta, D)$ with the following properties:

$\bullet$ $(\Sigma,\alpha,\beta)$ is a Heegaard diagram for $S^3$, with $\alpha\pitchfork \beta$.

$\bullet$ $D$ is an alternating link diagram on $\Sigma$ which cuts $\Sigma$ into disks, with $D\pitchfork \alpha$ and $D\pitchfork \beta$. 

$\bullet$  $D\cap\alpha=D\cap\beta=\alpha\cap \beta$, none of these points being crossings of $D$. 

$\bullet$ There is a checkerboard partition $\Sigma\setminus(\alpha\cup\beta)=\Sigma_\varnothing\cup \Sigma_K$, in which 
$\Sigma_\varnothing$ consists of disks disjoint from $D$, in which $D$ cuts $\Sigma_K$ into disks each of whose boundary contains at least one crossing point and at most two points of $\alpha\cap \beta$, and in which $ 2  g(\Sigma)+|\Sigma_\varnothing|=|\alpha|+|\beta|$.
 \end{theorem}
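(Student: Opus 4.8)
The plan is to check the four bullets in turn; the first three are immediate from the construction together with Lemmas~\ref{L:1} and~\ref{L:2}, and only the counting statement in the fourth requires real work. For the first bullet, Lemma~\ref{L:1} and the remark following its proof give that $(\Sigma,\alpha,\beta)$ is a Heegaard diagram for $S^3$, while $\alpha\pitchfork\beta$ holds because the construction arranges $\alpha$, $\beta$, and $D$ to be pairwise transverse, meeting only in the triple points $X\cup Y$. The second bullet combines Lemma~\ref{L:2} ($D$ alternates on $\Sigma$), the observation closing \textsection\ref{Se:tur} (that $D$ cuts $\Sigma$ into disks, one state disk per face), and the transversality just noted. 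For the third bullet, the construction gives $\alpha\cap\beta=\alpha\cap D=\beta\cap D=X\cup Y$; each removed circle is disjoint from $D$ and so contains no point of $X\cup Y$, whence these equalities survive the removal step, and the points of $X\cup Y$ lie on arcs of $K\setminus C$, away from the crossing balls, so none of them is a crossing of $D$.

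For the fourth bullet I would first fix the checkerboard partition. Since $D$ is disjoint from the interiors of the state disks and meets $\alpha\cup\beta$ only in $X\cup Y$, the surviving state disks are exactly the components of $\Sigma\setminus(\alpha\cup\beta)$ that miss $D$; I would take $\Sigma_\varnothing$ to be these and $\Sigma_K$ the remaining components, and confirm that this is a genuine two-coloring from the local model in Figure~\ref{Fi:altTur}, in which every arc of $\alpha\cup\beta$ separates a state disk from a region meeting $D$. The same local pictures show that $D$ cuts each region of $\Sigma_K$ into disks, each carrying at least one crossing on its boundary and using at most two of the points $X\cup Y$.

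The identity $2g(\Sigma)+|\Sigma_\varnothing|=|\alpha|+|\beta|$ is the crux, and pure surface combinatorics are insufficient for it: treating $\alpha\cup\beta$ as a $4$-valent graph with vertex set $\alpha\cap\beta$ yields only $|\Sigma_\varnothing|+|\Sigma_K|=|\alpha\cap\beta|+2-2g(\Sigma)$, which never sees the circle counts $|\alpha|,|\beta|$. The missing input is that $\alpha$ and $\beta$ bound the disk systems $\hat{\alpha},\hat{\beta}$ cutting $H_\alpha,H_\beta$ into balls. Computing $\chi(H_\alpha)=1-g(\Sigma)$ and cutting along the $|\alpha|$ disks gives $|\alpha|=k_\alpha+g(\Sigma)-1$, where $k_\alpha$ counts the resulting balls, and likewise $|\beta|=k_\beta+g(\Sigma)-1$; the target identity is then equivalent to $|\Sigma_\varnothing|=k_\alpha+k_\beta-2$.

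Proving $|\Sigma_\varnothing|=k_\alpha+k_\beta-2$ is the main obstacle. I would first check that this identity is stable under the final removal step: each removed disk is parallel to $\Sigma$, so deleting it merges a ball cut off $\Sigma$ with its neighbor (dropping $k_\alpha$ or $k_\beta$ by one) while absorbing one state disk into an adjacent region (dropping $|\Sigma_\varnothing|$ by one), so the identity is preserved. It then remains to verify the identity at the pre-removal stage, where $\Sigma_\varnothing$ is the full set of $|s_A|+|s_B|$ state disks; for this I would use the ball decomposition from the proof of Lemma~\ref{L:1}, in which $S^2\cup C$ cuts $S^3$ into two balls that $\Sigma$ refines, to identify the balls of $H_\alpha\setminus\hat{\alpha}$ and $H_\beta\setminus\hat{\beta}$ and count them against the state disks. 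Pinning down this last identification, matching the complementary balls to state disks, is where the genuine difficulty lies.
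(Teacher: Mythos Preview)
Your treatment of the first three bullets and of the checkerboard structure in the fourth is fine and matches the paper. The divergence is in how you handle the identity $2g(\Sigma)+|\Sigma_\varnothing|=|\alpha|+|\beta|$.

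You assert that ``pure surface combinatorics are insufficient'' and then pass to the $3$-dimensional problem of counting the balls $k_\alpha,k_\beta$ into which $\hat{\alpha},\hat{\beta}$ cut the handlebodies, leaving yourself with the admittedly unfinished task of matching those balls against state disks. The paper avoids this entirely. The key observation you are missing is that removing the disks of $\Sigma_\varnothing$ from $\Sigma$ and gluing in the disks of $\hat{\alpha}$ and $\hat{\beta}$ yields a surface isotopic to $S^2$; this is essentially immediate from the construction, since $\hat{\alpha}$ and $\hat{\beta}$ were \emph{defined} as the checkerboard regions of $S^2\setminus(C\cup K)$ and $\Sigma_K$ carries the rest of $S^2$ near $C\cup K$ (cf.\ Figure~\ref{Fi:TurSurg}). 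With that in hand, a second Euler-characteristic equation is available: using the same $4$-valent graph $\alpha\cup\beta$ on the surgered sphere, the face count is $|\Sigma_K|+|\alpha|+|\beta|$, so
\[
2=\chi(S^2)=|\alpha\cap\beta|-2|\alpha\cap\beta|+|\Sigma_K|+|\alpha|+|\beta|.
\]
Subtracting your equation $|\Sigma_\varnothing|+|\Sigma_K|=|\alpha\cap\beta|+2-2g(\Sigma)$ gives the identity at once. So surface combinatorics \emph{are} sufficient; you just needed the surgery-to-$S^2$ observation to supply a second relation. Your ball-counting route is not wrong, but it is a detour, and the ``genuine difficulty'' you flag at the end is an artifact of that detour rather than an intrinsic obstacle.
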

\begin{proof}
We have already established the first three properties.  Let $\Sigma_\varnothing$ consist of the interiors of all adjusted 
state disks whose boundary contains at least one point of $\alpha\cap \beta$, i.e. those whose boundary still lies in $\alpha\cup\beta$ after the removal of the attaching circles disjoint from $D$. 
These state disks are disjoint from $D$ and 
constitute a checkerboard class of $\Sigma\setminus(\alpha\cup \beta)$. 
See Figure~\ref{Fi:TurSurg}. 

Let $\Sigma_K$ denote the other checkerboard class of $\Sigma\setminus (\alpha\cup\beta)$.
Each component of $\Sigma_K\setminus D$ is also a component of $(\Sigma\setminus D)\setminus(\alpha\cup\beta)$, and each attaching circle intersects $D$; therefore, $D$ cuts $\Sigma_K$ into disks. Further, each arc of $K\setminus C$ contains at most one point of $\alpha\cap \beta$, and each arc of $(\alpha\cup\beta)\setminus D$
is parallel through $\Sigma$ to $D$; consequently, the boundary of each disk of $\Sigma_K\setminus D$ contains at least one crossing point and at most one arc of $(\alpha\cup\beta)\setminus D$, hence at most two points of $\alpha\cap\beta$.
 
 Finally, to see that $2g(\Sigma)+|\Sigma_\varnothing|=|\alpha|+|\beta|$, consider Euler characteristic in light of the observation that removing the disks of
  $\Sigma_\varnothing$ from $\Sigma$ and gluing in the disks of 
  $\hat{\alpha}$ and $\hat{\beta}$ 
yields a sphere isotopic to $S^2$. Near each point of $\alpha\cap \beta$, this surgery appears as in Figure~\ref{Fi:TurSurg}. 
\end{proof}

\begin{figure}
\begin{center}
\scalebox{.3}{\includegraphics{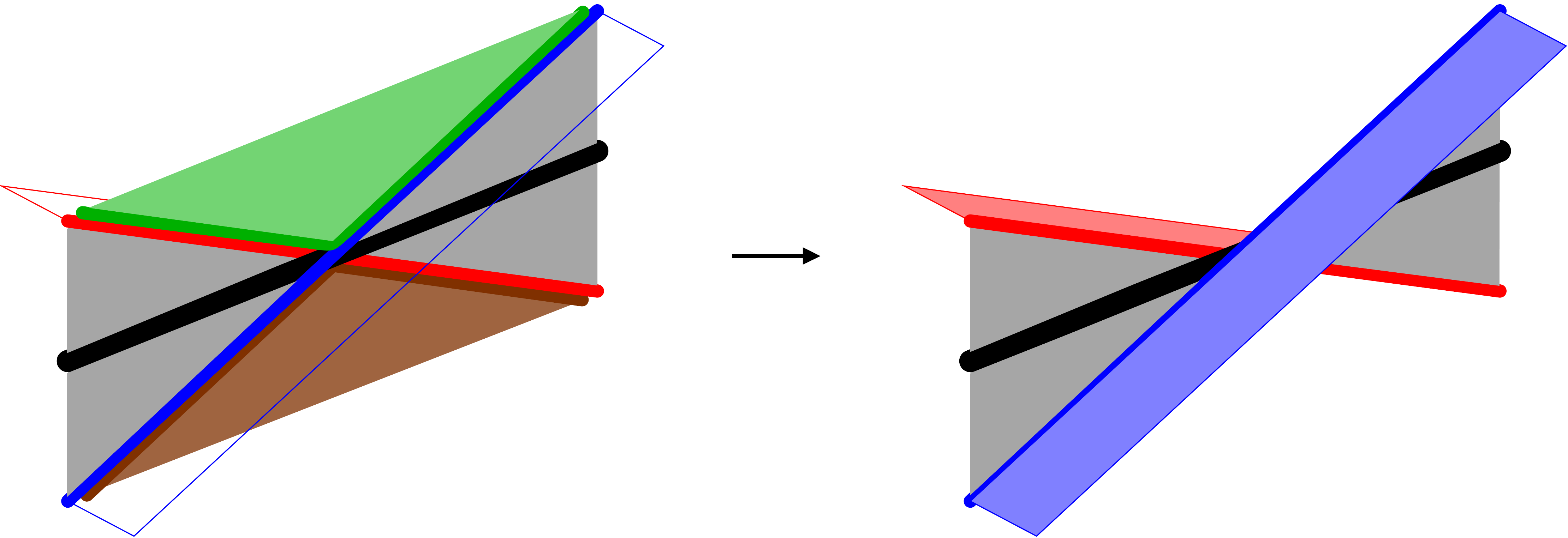}  }
\caption{Given a diagram $(\Sigma,\alpha,\beta,D)$ with the properties in Theorems \ref{Th:5}, \ref{Th:Main1}, or  \ref{Th:Gen}, removing the disks of $\Sigma_\varnothing$ from $\Sigma$ and gluing in the disks of $\hat{\alpha}$ and $\hat{\beta}$ produces a sphere on which $D$ forms a link diagram. 
Near each point of $\alpha\cap \beta$, this surgery appears as shown, up to mirroring.
}\label{Fi:TurSurg}
\end{center}
\end{figure}

\section{Correspondence between Heegaard diagrams and Turaev surfaces}

\subsection{Main correspondence}
From the Turaev surface 
of a connected
link diagram on $S^2\subset S^3$, we have constructed a link-adapted Heegaard diagram $(\Sigma,\alpha,\beta, D)$ with several properties.  We will now see that any such diagram corresponds to the Turaev surface of some link diagram on the sphere.

\begin{theorem}\label{Th:Main1}
There is a 1-to-1 correspondence between Turaev surfaces of connected link diagrams on $S^2\subset S^3$ and diagrams $(\Sigma,\alpha,\beta,D)$ with the following properties:

$\bullet$ $(\Sigma,\alpha,\beta)$ is a Heegaard diagram for $S^3$, with $\alpha\pitchfork \beta$.

$\bullet$ $D$ is an alternating link diagram on $\Sigma$ which cuts $\Sigma$ into disks, with $D\pitchfork \alpha$ and $D\pitchfork \beta$. 

$\bullet$  $D\cap\alpha=D\cap\beta=\alpha\cap \beta$, none of these points being crossings of $D$. 

$\bullet$ There is a checkerboard partition $\Sigma\setminus(\alpha\cup\beta)=\Sigma_\varnothing\cup \Sigma_K$, in which 
$\Sigma_\varnothing$ consists of disks disjoint from $D$, in which $D$ cuts $\Sigma_K$ into disks each of whose boundary contains at least one crossing point and at most two points of $\alpha\cap \beta$, and in which $ 2  g(\Sigma)+|\Sigma_\varnothing|=|\alpha|+|\beta|$.
\end{theorem}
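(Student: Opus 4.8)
The plan is to treat Theorem \ref{Th:5} as supplying one direction of the correspondence, namely a map $\Phi$ sending the Turaev surface of a connected diagram $D_0$ on $S^2$ to the link-adapted diagram $(\Sigma,\alpha,\beta,D)$ constructed in \textsection\ref{Se:3}; by Theorem \ref{Th:5} this lands among the diagrams satisfying the four bulleted properties. To prove the correspondence is bijective, I would construct an explicit inverse $\Psi$, defined on any diagram $(\Sigma,\alpha,\beta,D)$ with those properties, and then check that $\Psi\circ\Phi$ and $\Phi\circ\Psi$ are the identity, working throughout up to isotopy of the relevant data.

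The map $\Psi$ is the surgery depicted in Figure \ref{Fi:TurSurg}. Since $(\Sigma,\alpha,\beta)$ is a Heegaard diagram for $S^3$, the circles $\alpha$ bound compressing disks $\hat\alpha\subset H_\alpha$ and $\beta$ bound $\hat\beta\subset H_\beta$, each unique up to isotopy rel boundary because a compressing disk in a handlebody is determined by its boundary curve. One removes the disks of $\Sigma_\varnothing$ from $\Sigma$ and glues in $\hat\alpha\cup\hat\beta$; near each point of $\alpha\cap\beta$ this is the local saddle move of Figure \ref{Fi:TurSurg}, performed away from all crossings of $D$. I would first verify that the resulting closed surface $\hat\Sigma$ is a $2$-sphere: it is orientable and embedded in $S^3$, a short Euler-characteristic count converts the hypothesis $2g(\Sigma)+|\Sigma_\varnothing|=|\alpha|+|\beta|$ into $\chi(\hat\Sigma)=2$, and connectivity follows because the link diagram $D$, disjoint from the surgery locus, still cuts $\hat\Sigma$ into disks and is connected. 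Since the crossing balls $C$ are fixed and disjoint from the surgery, $D$ descends to a connected link diagram $D_0=\Psi(\Sigma,\alpha,\beta,D)$ on $\hat\Sigma\cong S^2$, inheriting its over/under data from the unchanged configuration $K\subset(\Sigma\setminus C)\cup\partial C$.

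The identity $\Psi\circ\Phi=\mathrm{id}$ is essentially recorded in the proof of Theorem \ref{Th:5}: performing exactly this surgery on $\Phi(D_0)$ returns a sphere isotopic to $S^2$ while fixing $D$, hence returns $D_0$. The substance of the theorem is $\Phi\circ\Psi=\mathrm{id}$: starting from $(\Sigma,\alpha,\beta,D)$, I must show that the Turaev surface of $D_0$, together with its canonical checkerboard attaching circles, reproduces the original diagram. The surgery identifies the disks of $\Sigma_\varnothing$ with capped state circles of $D_0$ lying on the two sides of $S^2$, and each point of $\alpha\cap\beta$ with a cobordism saddle; reversing $\Psi$ therefore rebuilds $\Sigma$ as the generalized Turaev surface of this dual pair of states. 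The point is that these two states are precisely the all-A and all-B states of $D_0$, which is exactly where the hypothesis that $D$ alternates on $\Sigma$ enters: reading the smoothing determined by $\Sigma_\varnothing$ at each crossing through the local configurations of Figure \ref{Fi:altTur}, the alternating condition forces the positive side to carry the all-A smoothing and the negative side the all-B smoothing at every crossing simultaneously. This is Lemma \ref{L:2} read backwards, and it is exactly the property that Theorem \ref{Th:Gen} relaxes by allowing an arbitrary dual pair of states.

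The main obstacle I expect is this last step, proving that the states recovered by the surgery are the extreme all-A and all-B states rather than some intermediate dual pair. Establishing it cleanly requires a careful local analysis at each crossing, matching the checkerboard class $\Sigma_\varnothing$ against the A- and B-smoothings of Figure \ref{Fi:smooth} via the configurations of Figure \ref{Fi:altTur}, and using that the alternating hypothesis synchronizes these choices across all crossings. Secondary technical points are the verification that $\hat\Sigma$ is genuinely $S^2$ from the numerical hypothesis and the confirmation that $\Phi$ and $\Psi$ descend to isotopy classes, the latter resting on uniqueness of compressing disks in the handlebodies $H_\alpha$ and $H_\beta$.
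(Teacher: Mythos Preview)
Your proposal is correct and follows essentially the same approach as the paper: invoke Theorem \ref{Th:5} for one direction, perform the surgery of Figure \ref{Fi:TurSurg} to recover $S^2$ for the other, verify via the Euler-characteristic hypothesis that the result is a sphere, and use the alternating condition on $\Sigma$ to identify the recovered dual states as the all-A and all-B states. The paper includes one small technical step you do not mention---after building $S^2$, it reinserts inessential attaching circles inside any disks of $S^2\setminus D$ lying entirely in $\Sigma_K$, so that every checkerboard region of $S^2\setminus D$ contains a disk of $\hat\alpha$ or $\hat\beta$---but this is bookkeeping rather than a change of strategy.
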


\begin{proof}
Theorem \ref{Th:5} provides one direction of this correspondence.  It remains to prove the converse.  

Assume that the diagram $(\Sigma,\alpha,\beta,D)$ is as described.  Remove the disks of $\Sigma_\varnothing$ from $\Sigma$ and glue in the disks of $\hat{\alpha}$ and $\hat{\beta}$ to obtain a closed surface. (See Figure~\ref{Fi:TurSurg}.)   Because $D$ is connected and $ 2  g(\Sigma)+|\Sigma_\varnothing|=|\alpha|+|\beta|$, this surface is a sphere -- call it $S^2$.  Moreover, $D$, being disjoint from $\Sigma_\varnothing$ and having its crossing points in $\Sigma_K$, forms a link diagram on $S^2$.  We claim, up to isotopy, that $\Sigma$ is the Turaev surface of the link diagram $D$ on $S^2$.

The property that $D$ cuts $\Sigma_K$ into disks implies that $D$ intersects each attaching circle, cutting $\alpha$ and $\beta$ into arcs.  Because the boundary of each disk of $\Sigma_K\setminus D$ contains at most two points of $\alpha\cap \beta$, each of these arcs is parallel through one of these disks to $D$.  The property that the boundary of each disk of $\Sigma_K\setminus D$ contains at least one crossing point then implies that there is at most one point of $\alpha\cap\beta$ on $D$ between any two adjacent crossings. 
 
The link diagram $D$ cuts $S^2$ into disks admitting a checkerboard partition.  Because $S^2$ appears near each point of $\alpha\cap \beta$ as in Figure~\ref{Fi:TurSurg}, one of the checkerboard classes contains
 $\hat{\alpha}$, and the other contains 
 $\hat{\beta}$.  
Yet, some disks of $S^2\setminus D$ may be entirely contained in $\Sigma_K$, hence disjoint from $\alpha$ and $\beta$.  
Construct an attaching circle in the interior of each such disk, and incorporate it into either $\alpha$ or $\beta$ according to the checkerboard pattern, letting $\alpha$ and $\beta$ retain their names.   Span each new circle of $\alpha$ by a new disk of $\hat{\alpha}$ on the same side of $\Sigma$ as the other disks of $\hat{\alpha}$, and similarly span each new circle of $\beta$ by a new disk of $\hat{\beta}$. 

The components of  $\Sigma\setminus (\alpha\cup\beta)$ still admit a checkerboard partition, $\Sigma\setminus (\alpha\cup \beta)=\Sigma_\varnothing\cup \Sigma_K$, in which $\Sigma_\varnothing$ consists of disks disjoint from $D$, though $D$ no longer need cut $\Sigma_K$ into disks. 
The preceding modification of $\alpha$, $\beta$, $\hat{\alpha}$, and $\hat{\beta}$ corresponds to an isotopy of $S^2$, which again may be obtained from $\Sigma$ by removing the disks of $\Sigma_\varnothing$ and gluing in the disks of $\hat{\alpha}$ and $\hat{\beta}$.

Let $K\subset (\Sigma\setminus C)\cup\partial C$ be a crossing ball configuration corresponding to the link diagram $D$ on $\Sigma$, with $C\cap\alpha=\varnothing=C\cap \beta$.  Note that $K\subset (S^2\setminus C)\cup\partial C$ is also a crossing ball configuration corresponding to the link diagram $D$ on $S^2$.

Currently $\Sigma$ and $S^2$ are non-transverse, even away from $C$, as both $\Sigma$ and $S^2$ contain $\Sigma_K$. Rectify this by perturbing $S^2$ as follows, fixing $\Sigma$, $\alpha$, $\beta$, $\hat{\alpha}$, $\hat{\beta}$, $D$, $\Sigma_\varnothing$, $\Sigma_K$, $C$, and $K$ in the process.  
(We initially constructed $S^2$ by gluing together $\hat{\alpha}$, $\hat{\beta}$, and $\Sigma_K$, but now we are pushing $S^2$ off of them.)
Each disk of $S^2\setminus(C\cup K)$ currently contains a disk of either $\hat{\alpha}$ or $\hat{\beta}$; push the disk of $S^2\setminus (C\cup K)$ off $\Sigma$ in the corresponding direction, while fixing its boundary, which lies in $\Sigma\cap(K\cup \partial C)$.  This isotopy makes $S^2$ disjoint from $\alpha$ and $\beta$, except at the points of $\alpha\cap \beta$.  In fact, this isotopy gives $S^2\cap \Sigma=S^2\cap(C\cup K)=\Sigma\cap(C\cup K)$, as was the case 
in \textsection\ref{Se:tur}.  (Recall Figure~\ref{Fi:Saddle}.)

Because $D$ is alternating on $\Sigma$, the disks of $\Sigma\setminus (C\cup K)$ admit a checkerboard partition -- the boundaries of the disks in the two classes are the all-A and all-B state circles for the link diagram $D$ on $\Sigma$.
Further, each of these state circles on $\Sigma$ encloses precisely one disk of $\Sigma_\varnothing$.  Color green each disk of $\Sigma_\varnothing$ enclosed by a circle from the all-A state, and color brown each disk of $\Sigma_\varnothing$ enclosed by a circle from the all-B state. Near each arc of $K\setminus C$, $\Sigma$ now appears as in Figure~\ref{Fi:altTur} (possibly with red and blue reversed). As 
 a final adjustment, slightly perturb the green and brown disks so that they become disjoint from $\alpha$, $\beta$, and $D$. 

 Removing the green and brown disks from $\Sigma$ leaves a cobordism between
 their boundaries. 
 Cutting this cobordism along $S^2\cap(K\cup \partial C)$ yields the disks of $S^2\setminus C$, together with annuli lying to either side of $S^2$, through which the boundaries of the green and brown disks are respectively parallel to the all-A and all-B states of the  link diagram $D$ on $S^2$.
 As claimed, $\Sigma$ is therefore the Turaev surface of the link diagram $D$ on $S^2$.
\end{proof}

\subsection{Generalization to arbitrary dual states}

As noted at the end of \textsection\ref{Se:tur}, the construction of the Turaev surface from the all-A and all-B states of a link diagram $D$ on $S^2$ generalizes to 
any pair of states of $D$ which are  dual to one another, having opposite smoothings at each crossing.  
The correspondence developed  in \textsection3 and \textsection4.1 between link-adapted Heegaard diagrams $(\Sigma,\alpha,\beta,D)$ and Turaev surfaces extends to these generalized Turaev surfaces, the only difference being that $D$ no longer need alternate on $\Sigma$.

\begin{theorem}\label{Th:Gen}
There is a 1-to-1 correspondence between generalized Turaev surfaces 
of connected link diagrams on $S^2\subset S^3$, and diagrams $(\Sigma,\alpha,\beta,D)$ with the following properties:

$\bullet$ $(\Sigma,\alpha,\beta)$ is a Heegaard diagram for $S^3$, with $\alpha\pitchfork \beta$.

$\bullet$ $D$ is a link diagram on $\Sigma$ which cuts $\Sigma$ into disks, with $D\pitchfork \alpha$ and $D\pitchfork \beta$. 

$\bullet$  $D\cap\alpha=D\cap\beta=\alpha\cap \beta$, none of these points being crossings of $D$. 

$\bullet$ There is a checkerboard partition $\Sigma\setminus(\alpha\cup\beta)=\Sigma_\varnothing\cup \Sigma_K$, in which 
$\Sigma_\varnothing$ consists of disks disjoint from $D$, in which $D$ cuts $\Sigma_K$ into disks each of whose boundary contains at least one crossing point and at most two points of $\alpha\cap \beta$, and in which $ 2  g(\Sigma)+|\Sigma_\varnothing|=|\alpha|+|\beta|$.
\end{theorem}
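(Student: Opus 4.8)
The plan is to mirror the proofs of Theorems \ref{Th:5} and \ref{Th:Main1}, substituting an arbitrary dual pair of states $s,\tilde s$ (as in the generalized construction at the end of \textsection\ref{Se:tur}) for the all-A and all-B states, and isolating the single place where the alternating hypothesis was used. For the forward direction I would run the construction of \textsection\ref{Se:3} verbatim on a generalized Turaev surface $\Sigma$ built from $s$ and $\tilde s$: take $\hat\alpha,\hat\beta$ to be the two checkerboard classes of $S^2\setminus(C\cup K)$, set $\alpha:=\partial\hat\alpha$ and $\beta:=\partial\hat\beta$, and perturb them through the cobordism as before, now letting each arc of $\alpha\setminus(X\cup Y)$ follow a circle of $s$ or of $\tilde s$ and pushing it to $S^2\times(0,1)$ or $S^2\times(-1,0)$ accordingly; then align the state circles with $\alpha\cup\beta$ and delete the attaching circles disjoint from $D$. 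The proof of Lemma \ref{L:1} uses only how $S^2\cup C$ and $\Sigma$ cut $S^3$ into balls, so $(\Sigma,\alpha,\beta)$ is again a Heegaard diagram for $S^3$; and each conclusion of Theorem \ref{Th:5} --- the triple-point condition $D\cap\alpha=D\cap\beta=\alpha\cap\beta$, the disk-and-crossing structure of $\Sigma_K$, and the identity $2g(\Sigma)+|\Sigma_\varnothing|=|\alpha|+|\beta|$ --- is checked without reference to which smoothing each state uses at a crossing. The one exception is Lemma \ref{L:2}, whose argument relies on the uniform all-A/all-B saddle orientations of Figure \ref{Fi:altTur} to force $D$ to alternate on $\Sigma$; this is exactly the conclusion we drop.

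For the converse I would follow the proof of Theorem \ref{Th:Main1} up to the identification of the states. Surgering $\Sigma_\varnothing$ out of $\Sigma$ and gluing in $\hat\alpha\cup\hat\beta$ (Figure \ref{Fi:TurSurg}) produces a closed surface which, since $D$ is connected and $2g(\Sigma)+|\Sigma_\varnothing|=|\alpha|+|\beta|$, is a sphere $S^2$ carrying $D$ as a link diagram. I would then add an attaching circle inside each disk of $S^2\setminus D$ that is contained in $\Sigma_K$, incorporating it according to the checkerboard coloring of $S^2\setminus D$ (which exists for any diagram on $S^2$), fix a crossing-ball configuration with $C\cap(\alpha\cup\beta)=\varnothing$, and perturb $S^2$ off $\Sigma$ until $S^2\cap\Sigma=\Sigma\cap(C\cup K)$. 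None of these steps uses alternation.

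The divergence is the recovery of $s$ and $\tilde s$. In Theorem \ref{Th:Main1} alternation guaranteed that the regions of $\Sigma\setminus(C\cup K)$ checkerboard-color, with the two classes bounded by the all-A and all-B states, and the green/brown labeling of $\Sigma_\varnothing$ was read off from that coloring; without alternation this coloring need not exist. Instead I would use that each disk of $\Sigma_\varnothing$ lies in $\Sigma\setminus(C\cup K)$, hence misses the perturbed $S^2$ and, being connected, lies entirely on one side of it. Color such a disk green if it lies on the $S^2\times(0,1)$ side and brown otherwise, and let $s$ and $\tilde s$ be the green and brown boundary circles, projected to $S^2$. The saddle at each crossing separates the two sides of $S^2$, so the green circles resolve that crossing by one smoothing and the brown circles by the opposite one; hence $s$ and $\tilde s$ are dual states. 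Removing the green and brown disks from $\Sigma$ leaves a cobordism which, cut along $S^2\cap(K\cup\partial C)$, returns the disks of $S^2\setminus C$ together with annuli on either side through which the green and brown boundaries are parallel to $s$ and $\tilde s$. This exhibits $\Sigma$ as the generalized Turaev surface of $s,\tilde s$, and the recovery inverts the forward construction, giving the bijection.

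I expect this state-recovery step to be the main obstacle. With alternation gone, the clean dictionary relating the two checkerboard classes to the all-A and all-B states disappears, and one must instead argue directly, from the $S^2$-side of each $\Sigma_\varnothing$ disk and the local saddle geometry, that the green and brown boundaries project to consistent, mutually dual smoothings of $D$. Because $D$ need not alternate on $\Sigma$, the configurations near an arc of $K\setminus C$ are more varied than the three in Figure \ref{Fi:altTur}, so the real work is to confirm that every disk of $\Sigma_\varnothing$ is enclosed by exactly one state circle and that the two colorings never assign the same smoothing at a crossing.
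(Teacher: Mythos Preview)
Your forward direction is correct but the paper takes a shorter route: given dual states $s,\tilde s$ of $D$, reverse some crossings of $D$ to obtain a diagram $D'$ for which $s,\tilde s$ become the all-A and all-B states, apply the already-proven construction of \textsection\ref{Se:3} to $D'$ to get $(\Sigma,\alpha,\beta,D')$, and then switch the crossings back. Since the construction and the properties in Theorem \ref{Th:5} are insensitive to the over/under data at crossings, this immediately yields the required $(\Sigma,\alpha,\beta,D)$ without re-verifying anything.

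For the converse, both you and the paper recognize that the only obstruction to extending the proof of Theorem \ref{Th:Main1} verbatim is the checkerboard step, but the strategies diverge. You propose to color each disk of $\Sigma_\varnothing$ by which side of the perturbed $S^2$ it lies on, and then to argue directly from the saddle geometry that the resulting green/brown boundaries project to dual states; you correctly flag this duality check, together with the claim that each state circle encloses exactly one disk of $\Sigma_\varnothing$, as the real work. The paper instead proves that $\Sigma\setminus D$ admits a checkerboard partition, after which the proof of Theorem \ref{Th:Main1} carries over verbatim. The argument is short and homological: the local picture near each point of $\alpha\cap\beta$ (Figure \ref{Fi:TurSurg} and its mirror) forces every attaching circle to meet $D$ in an even number of points, and since the attaching circles generate $H_1(\Sigma)$ (Appendix), every simple closed curve on $\Sigma$ meets $D$ evenly, so $\Sigma\setminus D$ two-colors. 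This dissolves the obstacle you anticipated: once the checkerboard exists, the four regions around each crossing alternate colors, so duality of the recovered states is automatic, and each region of $\Sigma\setminus(C\cup K)$ encloses one $\Sigma_\varnothing$ disk exactly as before. Your side-of-$S^2$ coloring would in the end coincide with this checkerboard coloring, but establishing that directly requires the case analysis you describe; the paper's homological route avoids it entirely.
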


\begin{proof}
Given the generalized Turaev surface $\Sigma$ for dual states $s$ and $\tilde{s}$ of a connected link diagram $D$ on $S^2\subset S^3$, reverse some collection of the crossings of $D$ 
to obtain a new link diagram $D'$ for which $s$ and $\tilde{s}$ are the all-A and all-B states.   Construct the corresponding diagram $(\Sigma,\alpha,\beta,D')$ as in 
\textsection\ref{Se:3}.  Finally, switch back the reversed crossings of $D'$ to obtain the required diagram $(\Sigma,\alpha,\beta,D)$.

Conversely, suppose that $(\Sigma,\alpha,\beta,D)$ is as described.  The proof of Theorem \ref{Th:Main1} extends 
almost verbatim.  The only concern, as $D$ need not alternate on $\Sigma$, is whether or not 
the disks of $\Sigma\setminus D$ admit a checkerboard partition; it suffices to show that they do.

The condition that $D\cap \Sigma_\varnothing=\varnothing$ implies that one endpoint of each arc of $(\alpha\cup\beta)\setminus D$  appears as in Figure~\ref{Fi:TurSurg}, and the other appears as the mirror image.  Thus,   each attaching circle intersects $D$ in an even number of points.  The fact that the attaching circles generate $H_1(\Sigma)$ then implies that any simple closed curve on $\Sigma$ in general position with respect to $D$  must intersect $D$ in an even number of points, and hence that the disks of $\Sigma\setminus D$ admit a checkerboard partition. \end{proof}

\subsection{Conclusion} 
Up to isotopy, each link diagram on $S^2\subset S^3$ has a unique Turaev surface. Theorem \ref{Th:Main1} thus establishes -- via Turaev surfaces -- a 1-to-1 correspondence between link diagrams on $S^2\subset S^3$ and alternating, link-adapted Heegaard diagrams $(\Sigma,\alpha,\beta,D)$.

Similarly, Theorem \ref{Th:Gen} establishes -- via generalized Turaev surfaces constructed from dual states -- a 2-to-1 correspondence between states of link diagrams on $S^2\subset S^3$ and link-adapted Heegaard diagrams $(\Sigma,\alpha,\beta,D)$ for $S^3$ in which $D$ need not alternate on $\Sigma$.

\section{Appendix}
Let  $(\Sigma,\alpha,\beta)$ be a Heegaard diagram for $S^3$, 
and let $\gamma\subset \Sigma$ be an oriented, simple closed curve.  The following construction yields an expression for $[\gamma]\in H_1 (\Sigma)$ in terms of the homology classes of the oriented attaching circles, proving that the latter generate $H_1(\Sigma)$.

Because $H_1(S^3)$ is trivial, $\gamma$ bounds a Seifert surface $S\subset S^3$, on which $\gamma$ induces an orientation. Fixing $\gamma$, isotope $S$ so that its interior intersects $\Sigma$ transversally -- along simple closed curves and along arcs with endpoints on $\gamma$.  

Given a component $S_{\alpha,i}$ of $S\cap H_{\alpha}$, one may obtain an expression for $[\partial S_{\alpha,i}]\in H_1(\Sigma)$ in terms of the $[\alpha_j]$ by surgering $S_{\alpha,i}$ along successive outermost disks of $\hat{\alpha}\setminus S_{\alpha,i}$ until $\partial S_{\alpha,i}$ lies entirely in the punctured sphere $\Sigma\setminus \alpha$, at which point the expression is evident. An analogous procedure expresses the homology class of each component of $S\cap H_\beta$ in terms of the $[\beta_j]$.   
Summing over all components of $S\setminus \Sigma$ gives the desired expression for $[\gamma]\in H_1(\Sigma)$:
\[[\gamma]=[\partial S]=\sum_{\begin{matrix} \small{\text{Components }} \\ \small{S_{\alpha,i}\text{ of }S\cap H_\alpha} \\ \end{matrix}}[\partial S_{\alpha,i}]+\sum_{\begin{matrix} \small{\text{Components }} \\ \small{S_{\beta,i}\text{ of }S\cap H_\beta} \\ \end{matrix}}[\partial S_{\beta,i}]=\sum_{i,j} a_{i,j}[\alpha_j]+\sum_{i,j} b_{i,j}[\beta_j]\]
Conversely, if $(\Sigma,\alpha,\beta)$ is a Heegaard diagram for a 3-manifold $M$ with nontrivial first homology, then the oriented attaching circles do not  generate $H_1(\Sigma)$, since inclusion $\Sigma\hookrightarrow M$ induces a surjective map $H_1(\Sigma)\to H_1(M)$, whose kernel contains all the $[\alpha_j]$ and $[\beta_j]$.




\end{document}